\newcolumntype{C}{ >{\centering\arraybackslash} m }
\def\URL#1{\href{#1}{#1}}         
\newcommand{\R}{\mathbb R}
\newcommand\veb{{\ve b}}
\newcommand\vecc{{\ve c}}
\newcommand\ved{{\ve d}}
\newcommand\veg{{\ve g}}
\newcommand\vex{{\ve x}}
\newcommand\vey{{\ve y}}
\newcommand{\eoproof}{\hspace*{\fill} $\square$ \vspace{5pt}}
\def\ve#1{\mathchoice{\mbox{\boldmath$\displaystyle\bf#1$}}
{\mbox{\boldmath$\textstyle\bf#1$}}
{\mbox{\boldmath$\scriptstyle\bf#1$}}
{\mbox{\boldmath$\scriptscriptstyle\bf#1$}}}
\begin{document}

\title{An implementation of steepest-descent augmentation for linear programs}

\author{Steffen Borgwardt\inst{1}\and Charles Viss\inst{2}}


\institute{\email{\href{mailto:steffen.borgwardt@ucdenver.edu}{steffen.borgwardt@ucdenver.edu}};
University of Colorado Denver \and
\email{\href{mailto:charles.viss@ucdenver.edu}{charles.viss@ucdenver.edu}};
University of Colorado Denver 
}

\date{\today}

\maketitle

\begin{abstract}
Generalizing the simplex method, circuit augmentation schemes for linear programs follow circuit directions through the interior of the underlying polyhedron. Steepest-descent augmentation is especially promising, but an implementation of the iterative scheme is a significant challenge. We work towards a viable implementation through a model in which a single linear program is updated dynamically to remain in memory throughout. Computational experiments exhibit dramatic improvements over a naïve approach and reveal insight into the next steps required for large-scale computations.
\end{abstract}

\noindent {\bf{Keywords}:} circuits, linear programming, polyhedra \\\\
{\bf{MSC}: 52B05, 90C05, 90C08, 90C10}

\section{Introduction}

We consider the optimization of a linear objective $\vecc \in \R^n$ over a general polyhedron of the form
\begin{align*}
    P = \{ \vex \in \R^n \colon A \vex = \veb, \  B \vex \leq \ved \},
\end{align*}
where $A \in \R^{m_A \times n}$ and $B \in \R^{m_B \times n}$. Recall from \cite{bfh-14,bfh-16} the definition of the set of \textit{circuits} $\mathcal{C}(A, B)$ of $P$:

\begin{definition}[Circuits]\label{def:circuits}
For a polyhedron $P = \{ \vex \in \R^n \colon A \vex = \veb, B \vex \leq \ved \}$, \textit{the set of circuits of $P$}, denoted $\mathcal{C}(A,B)$, consists of those $\veg \in \ker(A) \setminus \{ \ve 0 \}$ normalized to coprime integer components for which $B \veg$ is support-minimal over the set $\{B \vex \colon \vex \in \ker(A) \setminus \{\ve0 \} \}$.
\end{definition}

Circuits first appeared in the literature as the \textit{elementary vectors} of a subspace \cite{r-69}. Geometrically, the set of circuits consists of all potential edge directions of $P$ as the right-hand side vectors $\veb$ and $\ved$ vary \cite{g-75}. For general purposes, any normalization which results in a unique positive and negative representative for each of these one-dimensional directions can be used when working with circuits. We refer to such a scalar multiple of a circuit $\veg \in \mathcal{C}(A,B)$ as a \textit{circuit direction} of $P$. The integer normalization in \Cref{def:circuits} allows for intuitive interpretations of the circuits of many polyhedra from combinatorial optimization \cite{b-13,bdfm-18,bh-17,bv-17,kps-17}.

Note that $\mathcal{C}(A, B)$ is dependent on the representation of a polyhedron. In fact, converting between representations can introduce exponentially many additional directions to the set of circuits \cite{bv-18}. In this paper, we therefore use the above representation for $P$ which generalizes both standard form and canonical form so that no conversion between representations is required.

As a generalization of the set of edge directions of $P$, the set of circuits is a \textit{universal test} set for any linear program over the polyhedron \cite{g-75}; i.e., given a feasible solution $\vex_0$ to the linear program $\min \{\vecc^T \vex \colon \vex \in P\}$, either $\vex_0$ is an optimal solution or there exists a circuit $\veg \in \mathcal{C}(A, B)$ and a step size $\alpha > 0$ such that $\vex_0 + \alpha \veg \in P$ and $\vecc^T \veg < 0$. Circuits are therefore used in the development of \textit{augmentation schemes} for solving linear programs in which successive, improving, maximal steps are taken along circuit directions until an optimal solution is reached or the problem is determined to be unbounded \cite{dhl-15,hor-13,how-11}. These schemes generalize the simplex method in that they follow circuits, the potential edge directions of the underlying polyhedron; however, their steps are not restricted to only the actual edges of the polyhedron and may traverse its interior. Hence, the \textit{polynomial Hirsch conjecture} -- which states that the \textit{combinatorial diameter} of a polyhedron can be bounded polynomially -- need not be true in order for there to exist a strongly-polynomial time circuit augmentation scheme for linear programming. For this reason, there has been recent interest in the study of the \textit{circuit diameter} \cite{bfh-14,bdf-16,kps-17}: the minimum number of circuit steps required to walk from one vertex to another in a polyhedron.

One example of a circuit augmentation algorithm is the \textit{greedy} or \textit{deepest-descent} augmentation scheme from \cite{dhl-15,gdl-15} which requires at most (weakly) polynomially many steps. The challenge in actually implementing this scheme lies in the computation of the required circuits -- a task presumed to be hard. A promising alternative is the \textit{steepest-descent} augmentation scheme of \cite{dhl-15} which generalizes the minimum-mean cycle canceling algorithm for solving network flow problems to any bounded linear program \cite{gdl-15}. The number of steps needed for this scheme is bounded by the number of circuits \cite{dhl-15}. However, a polyhedral model which encodes circuits as vertices can be used to efficiently compute each required circuit \cite{bv-18}. It follows that the algorithm terminates in strongly polynomial time for a polyhedron defined by a totally unimodular matrix \cite{bv-18}.

A limitation of this steepest-descent augmentation scheme is that a separate linear program is required at each iteration to provide the circuits. In this paper, we provide an improved implementation in which each iteration requires only a simple change in variable bounds for a single, dynamic linear program. Therefore, the same program remains in memory throughout the algorithm and each of the computed steepest-descent directions serves as a warm-start for the program in the subsequent iteration. Further, we generalize the scheme so that any steepest-descent direction can be used as an augmenting direction -- even if it is not necessarily a circuit of the underlying polyhedron. This enables modified implementations in which interior point methods are used to solve the dynamic program. 

We begin in \Cref{sec:steepest} by formally defining this generalized steepest-descent augmentation scheme and showing that it satisfies the same favorable properties as the original (\Cref{thm:steepest-descent-augmentation}). Next, in \Cref{sec:implementation} we detail our proposed implementation of the scheme. Lastly, computational results are presented in \Cref{sec:computations}. We compare various versions of our implementation to each other and to a na\"{i}ve approach (\Cref{sec:implementations_comparison}) and discuss the next steps required for adopting steepest-descent augmentation in a viable algorithm for large-scale computations (\Cref{sec:viability_discussion}). 

\section{Steepest-descent Augmentation}\label{sec:steepest}

We outline the basic principles behind a steepest-descent augmentation scheme for linear programs. Let $P = \{ \vex \in \R^n \colon A \vex = \veb, \  B \vex \leq \ved \}$ be a general polyhedron and consider the linear program $LP = \min\{\vecc^T \vex \colon \vex \in P\}$. At iteration $i$ of an augmentation scheme for solving $LP$, assume we have a feasible solution $\vex_i \in P$. If $\vex_i$ is not optimal, the iteration requires an \textit{augmenting direction} $\vey_i$ such that $\vey_i$ is an \textit{improving direction} with respect to $\vecc$ (i.e., $\vecc^T \vey_i < 0$) and such that $\vey_i$ is \textit{strictly feasible} at $\vex_i$ (i.e., there exists some $\alpha > 0$ such that $\vex_i + \alpha \vey_i \in P$). A \textit{maximal step} is then taken in the direction of $\vey_i$ starting at $\vex_i$: That is, $\vex_{i+1} = \vex_i + \alpha_i \vey_i$ where $\vex_i + \alpha_i \vey_i \in P$ but $\vex_i + \alpha_i \vey_i \notin P$ for any $\alpha > \alpha_i$.

In the steepest-descent circuit augmentation scheme from \cite{bv-18,dhl-15}, each of these augmenting directions $\vey_i$ is a so-called \textit{steepest-descent circuit} of $P$ with respect to $\vecc$ at $\vex_i$:

\begin{definition}[Steepest-descent Circuit]\label{def:steepest}
Given a polyhedron $P = \{ \vex \in \R^n \colon A \vex = \veb, \  B \vex \leq \ved \}$, a feasible solution $\vex_0 \in P$, and an objective $\vecc \in \R^n$, a \textit{steepest-descent circuit} at $\vex_0$ is a strictly feasible circuit $\veg \in \mathcal{C}(A,B)$ that minimizes $\vecc^T \veg / ||B \veg||_1$ over all such circuits.
\end{definition}
\noindent It is shown in \cite{bv-18} that such a circuit can be computed by finding a vertex solution to the following LP over a polyhedral model of the set of circuits of $P$:

\begin{align}
\tag{model}
\begin{split}\label{equation:steepest}
\min \ &  \vecc^T \vex \\
\text{s.t.} \  & A\vex = \ve0 \\
& B \vex = \vey^+ - \vey^- \\
& \vey^+_i = 0 \  \ \ \ \   \forall i \colon (B\vex_0)_i = \ved_i\\
 & \sum_{i=1}^{m_B} \vey^+_i + \sum_{i=1}^{m_B}\vey^-_i = 1 \\
  & \vey^+, \vey^- \geq \ve0.
\end{split}
\end{align}

Using this augmentation scheme, no circuit is ever repeated as an augmenting direction and the algorithm terminates in at most $|\mathcal{C}(A,B)|$ iterations \cite{bv-18,dhl-15}. More specifically, the number of iterations is bounded by the number of different values of $\vecc^T \veg / ||B \veg||_1$ over all circuits $\veg \in \mathcal{C}(A,B)$ times the dimension of $P$ -- a result known as \textit{Bland's Theorem} \cite{bj-92,dhl-15}.

When computing an augmenting direction via LP (\ref{equation:steepest}), a vertex solution corresponds to a circuit of $P$ \cite{bv-18}. However, even if the program returns a non-vertex optimal solution (for instance, via an interior point method), we show in \Cref{thm:steepest-descent-augmentation} that the corresponding direction can be used in a generalized steepest-descent augmentation scheme for which the above bounds on the number of iterations still hold. Namely, we define a \textit{steepest-descent augmenting direction} at $\vex_0 \in P$ to be any $\vey \in \ker(A)$ which is strictly feasible at $\vex_0$ and minimizes $\vecc^T \vey / ||B \vey||_1$ over all such directions. A \textit{steepest-descent augmentation} at $\vex_i \in P$ is thus a maximal augmenting step $\vex_{i+1} = \vex_i + \alpha_i \vey_i$ along a steepest-descent augmenting direction $\vey_i$.

\begin{theorem}\label{thm:steepest-descent-augmentation}
Consider the linear program $LP = \min\{ \vecc^T \vex \colon \vex \in P\}$ where $P = \{ \vex \in \R^n \colon A \vex = \veb, B \vex \leq \ved\}$. Given a feasible solution $\vex_0 \in P$, the number of steepest-descent augmentations needed to solve $LP$ is at most
\begin{align*}
    \dim(P) \cdot \left| \left\lbrace\frac{\vecc^T \veg}{ ||B \veg||_1} \colon \veg \in \mathcal{C}(A, B) \right \rbrace \right|.
\end{align*}
\end{theorem}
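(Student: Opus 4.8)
The plan is to adapt the proof of Bland's Theorem for the circuit case (as in \cite{dhl-15,bv-18}) to the generalized setting where augmenting directions need only lie in $\ker(A)$ rather than being circuits. First I would observe that the key quantity driving the analysis is the \emph{steepest-descent ratio} $\rho(\vey) := \vecc^T \vey / \|B\vey\|_1$, which is scale-invariant and therefore well-defined on directions. The crucial claim is that if $\vey$ is a steepest-descent augmenting direction at $\vex_0$ with $\vecc^T\vey < 0$, then we can decompose $\vey$ as a nonnegative combination of circuits of $P$, $\vey = \sum_k \lambda_k \veg_k$ with $\lambda_k > 0$, each $\veg_k \in \mathcal{C}(A,B)$, such that no cancellation occurs in the image under $B$ (i.e.\ $\|B\vey\|_1 = \sum_k \lambda_k \|B\veg_k\|_1$) and each $\veg_k$ is itself strictly feasible at $\vex_0$. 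This is the standard conformal (sign-compatible) circuit decomposition, and the no-cancellation property forces $\rho(\veg_k) \le \rho(\vey)$ for at least one $k$; but since $\vey$ minimizes $\rho$ over all strictly feasible directions in $\ker(A)$ and each $\veg_k$ is such a direction, in fact $\rho(\veg_k) = \rho(\vey) = \min_{\text{strictly feasible } \veg \in \mathcal{C}(A,B)} \rho(\veg)$ for every $k$. Hence the optimal ratio achieved by a steepest-descent augmenting direction always coincides with the optimal ratio achieved by an ordinary steepest-descent circuit.

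Next I would run the monotonicity argument on the sequence of ratios. Let $\vey_0, \vey_1, \dots$ be the steepest-descent augmenting directions chosen at the iterates $\vex_0, \vex_1, \dots$, and let $\rho_i := \rho(\vey_i)$ be the corresponding optimal ratios. By the previous paragraph each $\rho_i$ lies in the finite set $S := \{\vecc^T\veg/\|B\veg\|_1 : \veg \in \mathcal{C}(A,B)\}$. The heart of the argument is to show: (a) the ratios are nondecreasing, $\rho_i \le \rho_{i+1}$, because after a maximal step along $\vey_i$ some inequality becomes tight and the feasible cone of directions at $\vex_{i+1}$ is contained in the feasible cone at $\vex_i$ intersected with the relevant new constraint, so the best available ratio cannot improve; and (b) within any maximal run of iterations sharing a common ratio value $\rho_i = \rho_{i+1} = \cdots$, the number of such iterations is at most $\dim(P)$. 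For (b) one argues exactly as in \cite{dhl-15}: while the ratio is constant at value $r$, every augmenting direction used is a steepest-descent direction for the residual problem obtained by fixing all constraints already tight with the ``right'' orientation; these directions all satisfy $\vecc^T\vey = r\|B\vey\|_1$, and stringing them together with the maximal-step rule each step must reduce the dimension of the minimal face of $P$ containing the current iterate (or, equivalently, activate a genuinely new constraint that stays active), which can happen at most $\dim(P)$ times before the run ends. Multiplying the bound $\dim(P)$ per ratio value by the number $|S|$ of distinct ratio values gives the claimed bound $\dim(P)\cdot|S|$.

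I expect the main obstacle to be part (b) together with making the conformal decomposition genuinely yield \emph{strictly feasible} circuits at $\vex_0$. For the decomposition: one must verify that splitting $\vey$ conformally into circuits preserves strict feasibility summand-by-summand — that is, if $\vey$ does not immediately violate any inequality active at $\vex_0$, then neither does any conformal circuit component $\veg_k$, since $(B\veg_k)_j$ has the same sign as $(B\vey)_j$ (or is zero) for every row $j$, so in particular $(B\veg_k)_j \le 0$ on every row $j$ with $(B\vex_0)_j = \ved_j$. For (b): the subtlety is that a single maximal step need not strictly decrease the face dimension in general augmentation schemes, so one needs the specific structure of steepest-descent — that during a constant-ratio run the scheme behaves like an augmentation scheme on a lower-dimensional ``contracted'' polyhedron and never revisits a face — which is precisely the content of the proof of Bland's Theorem that we are invoking. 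I would structure the write-up so that the only genuinely new content beyond \cite{dhl-15,bv-18} is the reduction in the first paragraph (equality of optimal ratios between the circuit and non-circuit settings), after which the iteration-count bound follows from the existing theorem applied to the sequence $(\rho_i)$.
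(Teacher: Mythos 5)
Your proposal follows essentially the same route as the paper: conformally decompose each augmenting direction $\vey_i$ into strictly feasible circuits, show that every conformal circuit attains the same steepness as $\vey_i$ (your mediant/weighted-average argument is equivalent to the paper's argument that subtracting a less-steep component would yield a strictly steeper feasible direction), import the non-increasing steepness and orthant/sign-compatibility properties from \cite{dhl-15,bv-18}, and bound the number of iterations per fixed steepness value by $\dim(P)$ via a strict decrease in face dimension. One caveat: your stated justification for monotonicity of the ratios --- that the cone of feasible directions at $\vex_{i+1}$ is contained in the cone at $\vex_i$ intersected with the newly tight constraint --- is false in general, since constraints active at $\vex_i$ can become inactive at $\vex_{i+1}$ (consider moving between adjacent vertices of a square). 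The monotonicity claim itself is correct, but it requires the argument from \cite{dhl-15}, which the paper also imports rather than reproves; with that substitution your write-up matches the paper's proof.
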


\begin{proof}
The claim follows from the \textit{conformal sum} property of circuits  \cite{g-75}, which states that any $\vey \in \ker(A)$ can be expressed as the sum $\vey = \sum_{j=1}^t \lambda_j \veg_j$ of at most $\dim(P)$ circuits $\veg_j$ of $P$ (i.e., $t \leq n - m_A$), where $\lambda \geq 0$ and each $B \veg_j$ belongs to the same orthant of $\R^{m_B}$ as $B \vey$. Therefore, any augmenting direction $\vey$ decomposes into at most $\dim(P)$ \textit{conformal circuits} $\veg_1,...,\veg_t$, where each $\veg_j$ is \textit{sign-compatible} with $\vey$ with respect to the matrix $B$. 

Let $\vex_0,\vex_1,...,\vex_k$ be a sequence of steepest-descent augmentations beginning at $\vex_0$. We show that for each augmentation $\vex_{i+1} = \vex_i + \alpha_i \vey_i$, at least one of the conformal circuits of $\vey_i$ must not appear as a conformal circuit in any of the other augmenting directions.

By the definition of steepest-descent augmentations, we retain two useful properties of the (circuit) augmentations from \cite{bv-18,dhl-15}: the steepness of consecutive augmenting directions is non-increasing (i.e., $-\vecc^T \vey_{i+1} / ||B \vey_{i+1}||_1 \leq -\vecc^T \vey_{i} / ||B \vey_{i}||_1$) and a change in orthants from $B\vey_i$ to $B\vey_{i+1}$ implies a strict change in the steepness of the steps. 

Consider the augmenting direction $\vey_i$. Assume $\vey_i$ is not a circuit and let $\veg_1,...,\veg_t$ denote its conformal circuits (i.e., $\vey_i = \sum_{j=1}^t \lambda_j \veg_j$). We claim that the steepness $\vecc^T \vey_{i} / ||B \vey_{i}||_1$ of $\vey_i$ is equal to the steepness $\vecc^T \veg_{j} / ||B \veg_{j}||_1$  for each of the $\veg_j$'s. To see this, note by the definition of conformal circuits that each $\veg_j$ is strictly feasible at $\vex_i$. Hence, by choice of $\vey_i$, none of the $\veg_j$'s is steeper than $\vey_i$. On the other hand, suppose one of the $\veg_j$'s is less steep than $\vey_i$; i.e., $\vecc^T \veg_{j} / ||B \veg_{j}||_1 > \vecc^T \vey_{i} / ||B \vey_{i}||_1$. Without loss of generality, assume $\veg_1$ is less steep than $\vey_1$. We then have:
\begin{align*}
     \vecc^T(\vey_i - \lambda_1 \veg_1) &= \vecc^T \vey_i - \lambda_1  \vecc^T \veg_1 \\
     &= ||B \vey_i||_1 \frac{\vecc^T \vey_i}{||B \vey_i||_1} - ||B \veg_1||_1\frac{ \lambda_1  \vecc^T \veg_1}{||B \veg_1||_1} \\
     &< ||B \vey_i||_1 \frac{\vecc^T \vey_i}{||B \vey_i||_1} - ||B \veg_1||_1\frac{ \lambda_1  \vecc^T \vey_i}{||B \vey_i||_1} \\
     &= \frac{(||B \vey_i||_1 - || B (\lambda_1 \veg_1) ||_1) \cdot \vecc^T \vey_i}{||B\vey_i||_1} \\
     &\leq \frac{(||B (\vey_i - \lambda_1 \veg_1) ||_1) \cdot \vecc^T \vey_i}{||B\vey_i||_1},
\end{align*}
where the second inequality follows from the definition of conformal circuits. This implies:
\begin{align*}
    \frac{\vecc^T(\vey_i - \lambda_1 \veg_1)}{||B (\vey_i - \lambda_1 \veg_1) ||_1} < \frac{\vecc^T \vey_i}{||B \vey_i ||_1}.
\end{align*}
However, since $\vey_i - \lambda_1 \veg_1$ must itself be strictly feasible at $\vex_i$, this contradicts the choice of $\vey_i$. Thus, each $\veg_j$ has the same steepness as $\vey_i$.

It follows that each of the circuits $\veg_j$ can only appear as a conformal circuit while the steepness of augmenting directions is $\vecc^T \vey_{i} / ||B \vey_{i}||_1$. For a fixed steepness value, note that all of the applied augmenting directions must be sign-compatible with each other with respect to $B$. Hence, one of the $\veg_j$'s must not appear as a conformal circuit in any subsequent iteration, else the augmenting step would not have been maximal. Further, at most $\dim(P)$ augmentations can be applied for a fixed steepness value. To see this, note that due to sign-compatibility, if a maximal step terminates at a facet $F$ of $P$, subsequent augmentations for the current steepness value may not leave $F$. Therefore, each step belongs to a face of $P$ with strictly smaller dimension than that of the previous step.

Since the steepness of augmenting directions throughout the steepest-descent scheme is non-decreasing, it follows that the total number of iterations is bounded by the dimension of $P$ times the number of different possible steepness values -- the bound stated in the theorem.
\eoproof
\end{proof}

We note the similarity of this steepest-descent augmentation scheme for linear programs to gradient descent for general optimization problems. Gradient descent computes a steepest direction with respect to the Euclidean norm; in the case of linear minimization, this direction is the negative objective projected onto set of strictly feasible directions at the current solution (i.e., the \textit{inner cone} of the current solution). If a solution is on the boundary of the polyhedron, computing this direction is equivalent to the projection of the cost vector onto a polyhedral cone -- a quadratic programming problem. 

In contrast to the gradient descent direction, computing the steepest-descent direction becomes \textit{easier} as the number of facets containing the current solution increases: Each inequality of $B \vex_0 \leq \ved$ which is tight corresponds to a variable which becomes fixed in LP (\ref{equation:steepest}). Thus, steepest-descent augmentation can be interpreted as a viable implementation of a \textit{maximal-step} gradient descent scheme for linear programs -- instead of using the steepest feasible direction with respect to the Euclidean norm (implicitly assuming that the underlying solution space is a ball in $\R^n$), we compute the steepest feasible direction $\vey$ with respect to the 1-norm of $B\vey$, which takes into account the combinatorial structure of the underlying polyhedron.

\section{Implementation}\label{sec:implementation}

When implementing the steepest-descent augmentation scheme described in \Cref{sec:steepest}, the constraints of LP (\ref{equation:steepest}) change at each iteration according to the active facets at the current feasible solution. In this section, we modify the program so that only the variable upper bounds change at each iteration. Thus, a dynamic model can remain in memory throughout the algorithm and each of the computed steepest-descent directions can be used to warm-start the model in subsequent iterations.

Note that in LP (\ref{equation:steepest}), due to the non-negativity of $\vey^+, \vey^-$ and the constraint $\sum_{i=1}^{m_B} \vey^+_i + \sum_{i=1}^{m_B} \vey^-_i = 1$, each of the variables $\vey^+_i$ and $\vey^-_i$ is implicitly bounded above by $1$. When the current feasible solution $\vex_0$ satisfies $(B \vex_0)_i = \ved_i$, the upper bound for $\vey^+_i$ is strengthened to $0$. Therefore, we can reformulate LP (\ref{equation:steepest}) as follows:

\begin{align}
\tag{steepest}
\begin{split}\label{equation:model}
& \min \   \vecc^T \vex \\
\text{s.t.} \  & A \vex = \ve0 \\
& B \vex - \vey^+ + \vey^- = \ve0 \\
& \vey^+_i \leq 0 \  \ \ \ \   \forall i \colon (B \vex_0)_i = \ved_i\\
& \vey^+_i \leq 1 \  \ \ \ \   \forall i \colon (B \vex_0)_i < \ved_i\\
 & \sum_{i=1}^{m_B}\vey^+_i + \sum_{i=1}^{m_B} \vey^-_i = 1 \\
  & \vey^+, \vey^- \geq \ve0.
\end{split}
\end{align}

Since only the right-hand side of LP (\ref{equation:model}) changes at each iteration, the augmenting direction used in iteration $i$ of the steepest-descent augmentation scheme corresponds to a dual feasible solution to the program at iteration $i + 1$. Hence, warm-starting a dual simplex method to solve LP (\ref{equation:model}) at each iteration is a natural approach for computing the required directions. An outline of this proposed implementation of the steepest-descent augmentation scheme is detailed in \Cref{alg:steepest}.

\begin{algorithm}
\caption{Steepest-descent Augmentation Scheme}\label{alg:steepest}
\begin{algorithmic}[1]
\Procedure{SteepestDescent}{$P, \vecc, \vex_0 \in P$}\Comment{Solves $LP = \min_{\vex \in P} \vecc^T \vex$}
\State Initialize LP (\ref{equation:model}) using $P$, $\vecc$, and $\vex_0$.
\State $i \gets 0$
\State Solve LP (\ref{equation:model}) to obtain optimal solution $(\vex^*, \vey^+, \vey^-)$
\State $\vey_i \gets \vex^*$
\While{$\vecc^T \vey_i < 0$}
\State $\alpha_i \gets \max\{\alpha \in \R^+ \colon \vex_i + \alpha \vey_i \in P\}$
\State $\vex_{i+1} \gets \vex_i + \alpha_i \vey_i$
\State Modify variable upper bounds for LP (\ref{equation:model}) based on $\vex_{i+1}$
\State Using $\vey_{i}$ as a warm-start, re-solve LP (\ref{equation:model}) to obtain optimal solution $(\vex^*, \vey^+, \vey^-)$
\State $\vey_{i+1} \gets \vex^*$
\State $i \gets i + 1$
\EndWhile
\State \textbf{return} $\vex_i$
\EndProcedure
\end{algorithmic}
\end{algorithm}

The success of \Cref{alg:steepest} is dependent upon LP (\ref{equation:model}) being easier to solve than the original LP. Note that although LP (\ref{equation:model}) is formulated within a higher-dimensional space than that of the original problem due to the splitting of $B\vex$ into positive and negative parts, the actual increase in dimension of the feasible set is at most $m_B$ due to the introduced equality constraints. Thus, we can compare this formulation to the introduction of slack variables required when solving the original problem via the simplex method. Additionally, unlike the original problem, LP (\ref{equation:model}) does not depend on the right-hand side vectors $\ved$ or $\veb$. 

Recall also that each active facet at the current solution further reduces the dimension of LP (\ref{equation:model}). Hence, whereas highly degenerate solutions cause inefficiencies in the simplex method, these points make the computation of a steepest-descent circuit easier. Lastly, note that if $P$ has a pair of parallel facets, then the corresponding constraints in LP (\ref{equation:model}) are redundant. In particular, if $B_j = -B_k$, the program is equivalent to the following:
\begin{align*}
\min \ &  \vecc^T \vex \\
\text{s.t.} \  & A\vex = \ve0 \\
& (B \vex)_i = \vey^+_i - \vey^-_i \ \ \ \ \ \ \forall i \neq k\\
& \vey^+_i = 0 \ \ \ \ \ \ \ \ \ \ \ \ \ \ \ \  \ \ \ \   \forall i \neq k \colon (B\vex_0)_i = \ved_i \\
& \vey^-_j = 0 \ \ \ \ \ \ \ \ \ \ \ \ \ \ \ \ \  \ \ \ \text{if } (B \vex_0)_k = \ved_k \\
 & 2\vey^+_j + 2\vey^-_j + \sum_{i \neq j,k}(\vey^+_i  + \vey^-_i) = 1\\
  & \vey^+, \vey^- \geq \ve0.
\end{align*}

Taking all of the above into account, an interesting direction of research would be to determine families of LPs for which solving LP (\ref{equation:model}) becomes significantly easier than solving the original problem. However, even without this expert knowledge, we observe from our computational results in \Cref{sec:viability_discussion} that solving LP (\ref{equation:model}) from scratch is easier than solving the (warm-started) original problem approximately half of the time. When a previous steepest-descent direction is used as a warm-start for LP (\ref{equation:model}), its computation time is faster by an order of magnitude.

Thus, an order-of-magnitude advantage for LP (\ref{equation:model}) is achieved in all but the first iteration of \Cref{alg:steepest}. There are several ways a potentially effective warm-start could be computed for LP (\ref{equation:model}) in the first iteration as well: A steep, feasible direction at $\vex_0$ could be used to warm-start the primal simplex method (such as the first edge direction chosen by the simplex method when applied to the original problem), or a steep but not necessarily feasible direction could be used to warm-start the dual simplex method (such as the projection of the cost vector $\vecc$ onto the affine hull of $P$). A study of these possibilities is another natural direction for future research.

\section{Computational Results}\label{sec:computations}

We implemented the steepest-descent augmentation scheme as outlined in \Cref{alg:steepest} using Gurobi \cite{gurobi} to initialize and repeatedly solve LP (\ref{equation:model}) via the dual simplex method. The algorithm was evaluated on a subset of 79 problems from the Netlib LP Test Set \cite{k-04,netlib}, a repository which serves as a benchmark for comparing the performance of linear programming algorithms on real-life examples. Code for our experiments is available at \URL{https://github.com/charles-viss/steepest-descent}.

First, in \Cref{sec:implementations_comparison}, we evaluate our proposed implementation of the steepest-descent augmentation scheme by comparing it to other alternative implementations; i.e., foregoing the warm-starts at each iteration or using the primal simplex method or an interior point method to repeatedly solve LP (\ref{equation:model}) rather than the dual simplex method. Next, in \Cref{sec:viability_discussion}, we compare the performance of the scheme to that of the simplex method on the original problem and discuss the required next steps towards a viable implementation.

\subsection{Comparison of Implementations}\label{sec:implementations_comparison}
The average (mean and median) results of the experiments comparing different implementations of the steepest-descent scheme are given in \Cref{fig:table1}.  We measured the total running time for each variation or the scheme as well as the number of iterations and the time needed to solve LP (\ref{equation:model}) at each iteration. To evaluate the effectiveness of using warm-starts, we record both the average time needed to compute the first steepest-descent direction as well as the average time needed for all steepest-descent computations. All experiments were performed on an Intel i5 8th Gen CPU.

\begin{table}
\begin{center}
 \begin{tabular}{|c || c | c | c| c|} 
 \hline
  \ \ & \ dual simplex \  & \ interior point \ & \ primal simplex \ & \ dual simplex\** \ \\
 \hline \hline
 Mean Total Time (s) & \textbf{3.932} & 9.243 & 17.055 & 18.193 \\ 
 \hline
 Median Total Time (s) & \textbf{0.875} & 3.109 & 1.814 & 4.300 \\
 \hline \hline
 Mean Avg. Step Time (ms) & \textbf{3.78} & 27.38 & 25.37 & 49.30 \\ 
 \hline
 Median Avg. Step Time (ms) & \textbf{2.04} & 18.22 & 5.89 & 20.47 \\
  \hline \hline
 Mean First Step Time (ms) & 30.20 & \textbf{24.20} & 33.08  & 27.50 \\ 
 \hline
 Median First Step Time (ms) & 15.62 & 15.62 & 15.62 & 15.62 \\
  \hline \hline
 Mean SD Iterations & 231.6 & \textbf{171.1} & 234.1 & 231.6 \\ 
 \hline
 Median SD Iterations & 137.0 & \textbf{119.0} & 134.5 & 137.0 \\
 \hline
\end{tabular}
\end{center}
    \caption{Comparison of the running time, average step times, and number of iterations for different implementations of the steepest-descent scheme. (\**) indicates the dual simplex method without using warm-starts at each iteration.}
    \label{fig:table1}
\end{table}

We first note the success of the dual simplex implementation of the steepest-descent scheme compared to the primal simplex and interior point implementations. Both total running time and average step times are drastically reduced via dual simplex due to the effectiveness of warm-starts. By comparing the first step times to the average step times, we observe that whereas warm-starts for the interior point method do not appear to have a beneficial impact on average step time, warm-starting the dual or primal simplex method results in significant improvement. For the dual simplex method, warm-starts reduce the average step direction computation time by an order of magnitude. For the primal simplex method, a reduction in computation time is still apparent but not nearly as dramatic -- especially when comparing the mean average step times. However, both the interior point and primal simplex implementations (which use warm-starts) significantly outperform the dual simplex implementation when warm-starts are not utilized.

Lastly, we note that the interior point implementation -- which is capable of computing augmenting directions that are not necessarily circuits -- results in significantly fewer iterations than the other two implementations. This suggests that further improvements to the steepest-descent scheme could be achieved by integrating the capability to compute non-circuit augmenting directions into the proposed dual simplex implementation.

\subsection{Towards a Viable Implementation}\label{sec:viability_discussion}

Recall that the steepest-descent augmentation scheme requires an initial feasible solution. To evaluate the viability of the steepest-descent scheme, we therefore compare its performance to that of the primal simplex method warm-started with the same initial solution (i.e., Phase II of the primal simplex method on the original problem). This comparison is detailed in \Cref{fig:table2}, which provides the average total running time and number of iterations for both algorithms as well as the first and average step time for the steepest-descent scheme.

\begin{table}
\begin{center}
 \begin{tabular}{ |c || C{1.5cm} |C{1.5cm}| C{2cm} | C{2.2cm} | C{2.2cm} | C{2.2cm} |} 
 \hline
  SD Results  &  Total (s)  & \ N. Iters \ & First Step (Total, ms) &  First Step (Phase I, ms) & First Step (Phase II, ms) & Average Step (ms) \\
 \hline \hline
 Mean & 3.932 & 231.6 & 30.20 & 8.55 &  21.65 & 3.78 \\ 
 \hline
 Median & 0.875 & 137.0 & 15.62 & 4.98 & 10.64  & 2.04\\
 \hline
\end{tabular}

\vspace{.1in}

 \begin{tabular}{|c ||  c| c|} 
 \hline
  \ Simplex Results \ & \ Total (ms) \ & \ N. Iters \ \\
 \hline \hline
 Mean & 30.24 & 452.6  \\ 
 \hline
 Median & 15.61  & 285.0 \\
 \hline
\end{tabular}
\end{center}
    \caption{Breakdown of the total running time, first step time (Phase I and Phase II), and average step time of the steepest-descent (SD) scheme compared to the performance of the simplex method -- warm-started with the same starting feasible solution as the SD scheme -- on 79 problems from the Netlib LP test set.}
    \label{fig:table2}
\end{table}

We observe that although the \textit{total} running time for the steepest-descent scheme is not competitive with that of the simplex method, computing a first steepest-descent direction via LP (\ref{equation:model}) is comparable to solving the original problem, while computing subsequent directions is significantly easier. We note also that when computing the first steepest-descent direction, approximately one-third of the time is spent in Phase I of the dual simplex method. On the other hand, the simplex method when applied to the original problem starts immediately at Phase II. We therefore see eliminating Phase I of this initial steepest-descent computation as an important, future angle of attack for improving the steepest-descent scheme.

Additionally, we note that although the steepest-descent scheme takes much longer to actually terminate, the decrease in objective value achieved through its first few steps is often similar to that of the simplex method. Consider the results in \Cref{subfig:obj_cycle}, which are representative of the behavior of the steepest-descent scheme when applied to many of the Netlib problems. During the first few iterations of the scheme, relatively large steps are taken and the objective value decreases quickly -- at a similar rate to that of the simplex method. However, after this fast initial drop, the steepest-descent scheme requires many additional iterations to finally converge to an optimal solution.

\begin{figure}[t]
    \begin{subfigure}{0.495\textwidth}
        \includegraphics[width=\textwidth]{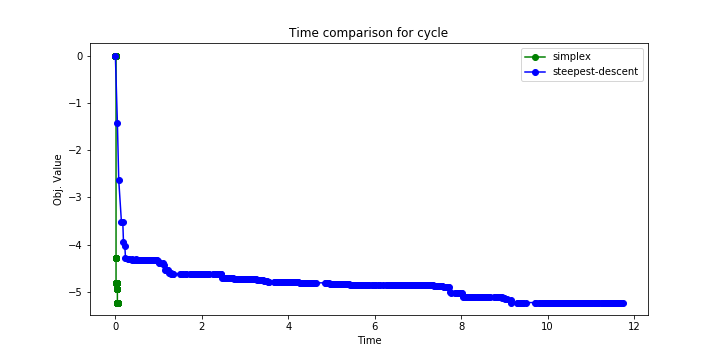}
        \caption{Results for \texttt{cycle}.}\label{subfig:obj_cycle}
    \end{subfigure}
    \begin{subfigure}{0.495\textwidth}
        \includegraphics[width=\textwidth]{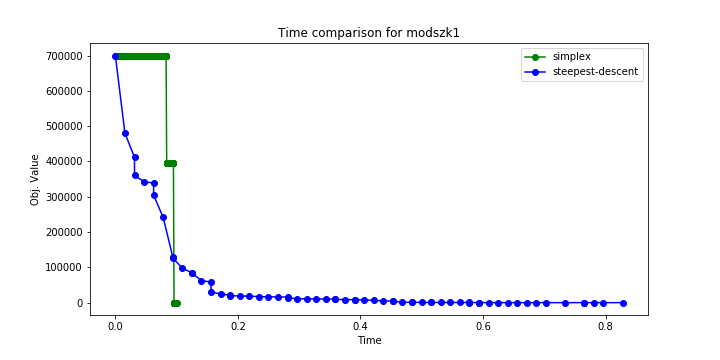}
        \caption{Results for \texttt{modszk1}.}\label{subfig:obj_modszk1}
    \end{subfigure}
    \caption{Plots of the objective value over time for the steepest-descent augmentation scheme and the primal simplex method on two problems from Netlib.}
    \label{fig:results1}
\end{figure}

Another example which exhibits this behavior is given in \Cref{subfig:obj_modszk1}. During its first few iterations, the steepest-descent scheme dramatically reduces the objective value while the simplex method stalls at a degenerate vertex. Thus, an optimal strategy for this problem and for similar problems could be to first use the steepest-descent scheme to quickly improve the objective value via large steps along steep, interior directions. Then, once a certain progress threshold is reached, the algorithm could jump to a nearby vertex and terminate quickly via the simplex method. Using such an approach, an optimal solution could potentially be found faster than the time required by either the steepest-descent scheme or the simplex method individually.

\begin{figure}[t]
    \centering
    \begin{subfigure}{0.325\textwidth}
        \includegraphics[width=\textwidth]{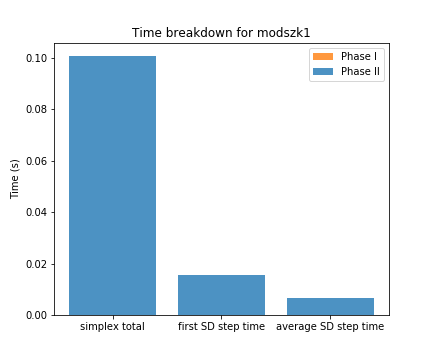}
        \caption{Results for \texttt{modszk1}.}\label{subfig:steps_modszk1}
    \end{subfigure}
    \begin{subfigure}{0.325\textwidth}
        \includegraphics[width=\textwidth]{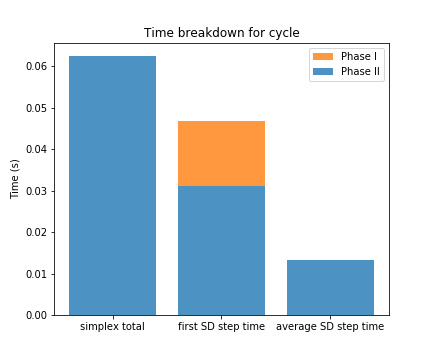}
        \caption{Results for \texttt{cycle}.}\label{subfig:steps_cycle}
    \end{subfigure}
    \begin{subfigure}{0.325\textwidth}
        \includegraphics[width=\textwidth]{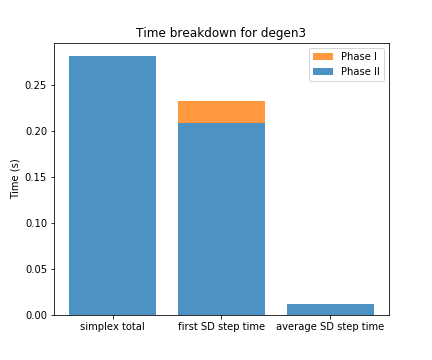}
        \caption{Results for \texttt{degen3}.}\label{subfig:steps_degen3}
    \end{subfigure}
    \caption{Solve time for the simplex method compared to the first and average steepest-descent computation times for three Netlib problems.}  \label{fig:results2}
\end{figure}

Note from \Cref{subfig:steps_modszk1} that one reason the steepest-descent scheme is initially successful in \Cref{subfig:obj_modszk1} is the fast computation time for the first steepest-descent direction. In fact, Phase I of this initial computation is virtually nonexistent. Hence, the problem is structured in such a way that LP (\ref{equation:model}) is relatively easy to solve via the dual simplex method -- even without warm-starts.

For other problems, this reiterates the need for a warm-start for LP~(\ref{equation:model}) at the first iteration. Consider \Cref{subfig:steps_cycle,subfig:steps_degen3}. The time needed to compute the first steepest-descent direction is almost as much as the time needed for the simplex method to solve the original problem. However, in subsequent iterations when a warm-start is available, the time needed to compute steepest-descent directions is significantly reduced. If the first iteration were to be as fast as subsequent iterations, the steepest-descent scheme may again have an opportunity to outperform the simplex method during its initial iterations.

In summary, further steps are needed for a practically viable implementation of the steepest-descent augmentation scheme. Most importantly, as discussed at the end of \Cref{sec:implementation} and as demonstrated by our computational results, a method is needed to quickly compute an effective warm-start for LP (\ref{equation:model}) in the first iteration. Such a direction could be determined based on the underlying LP or computed through tailored algorithms for special families of LPs. There are additional ways in which expert knowledge on the underlying LP could be beneficial: For example, in polyhedra defined by totally unimodular matrices, steps along steepest-descent circuit directions only visit integral solutions \cite{bv-17}. This ensures a lower bound on the decrease in objective value achieved at each iteration. Additionally, it significantly reduces the complexity of computing step sizes and ensures numerical stability -- addressing two issues which we observed for several of the Netlib test problems.

\bibliography{literature}
\bibliographystyle{plain}

\end{document}